\newcommand{\CM}{Cohen-Macaulay}
\newcommand{\B}{\mathcal{B} }
\newcommand{\n}{\mathfrak{n} }
\newcommand{\m}{\mathfrak{m} }
\newcommand{\Ac}{\mathcal{A} }
\newcommand{\Bc}{\mathcal{B} }
\newcommand{\C}{\mathcal{C} }
\newcommand{\D}{\mathcal{D} }
\newcommand{\T}{\mathcal{T} }
\newcommand{\Z}{\mathbb{Z} }
\newcommand{\rt}{\rightarrow}
\newcommand{\ov}{\overline}
\newcommand{\wh}{\widehat }
\newcommand{\wt}{\widetilde }
\newcommand{\CMS}{\operatorname{\underline{CM}}}
\newcommand{\Ass}{\operatorname{Ass}}
\newcommand{\mmod}{\operatorname{mod}}
\newcommand{\coh}{\operatorname{coh}}
\newcommand{\im}{\operatorname{im}}
\newcommand{\thick}{\operatorname{thick}}
\newcommand{\proj}{\operatorname{proj}}
\newcommand{\Supp}{\operatorname{Supp}}
\newcommand{\rad}{\operatorname{rad}}
\newcommand{\Hom}{\operatorname{Hom}}
\newcommand{\sHom}{\operatorname{\underline{Hom}}}
\theoremstyle{plain}
\newtheorem{theorem}{Theorem}[section]
\newtheorem{corollary}[theorem]{Corollary}
\newtheorem{lemma}[theorem]{Lemma}
\newtheorem{proposition}[theorem]{Proposition}
\theoremstyle{definition}
\newtheorem{definition}[theorem]{Definition}
\newtheorem{remark}[theorem]{Remark}
\theoremstyle{remark}
\begin{document}

\title[KRS-categorical]{Categorical Krull-Remak-Schmidt for triangulated categories}

\author{Tony~J.~Puthenpurakal}

\address{Department of Mathematics, IIT Bombay, Powai, Mumbai 400 076}

\email{tputhen@math.iitb.ac.in}
\date{\today}
\subjclass{Primary 18G80   ; Secondary 13D09, 13E35 }
\keywords{triangulated categories, thick subcategories, connected triangulated categories}

 \begin{abstract}
 Let $R$ be a commutative ring
If $\C_1$ and $\C_2$ are $R$-linear triangulated categories then we can give an obvious triangulated structure on $\C = \C_1 \oplus \C_2$ where $\Hom_\C(U, V) = 0$ if $U \in \C_i$ and $V \in \C_j$ with $i \neq j$. We say a $R$-linear triangulated category $\C$ is disconnected if $\C = \C_1 \oplus \C_2$ where $\C_i$ are non-zero triangulated subcategories of $\C$.
Let $\C_i$ and $\D_j$ be connected  triangulated $R$ categories with $i \in \Gamma$ and $j \in \Lambda$.
Suppose there is an equivalence of triangulated $R$-categories
\[
\Phi \colon \bigoplus_{i \in \Gamma}\C_i  \xrightarrow{\cong} \bigoplus_{j \in \Lambda}\D_j
\]
Then we show that  there is a bijective function $\pi \colon \Gamma \rt \Lambda$ such that we have an equivalence $\C_i \cong \D_{\pi(i)} $ for all $i  \in \Gamma$.
We give several examples of connected triangulated categories and also of triangulated subcategories which decompose into utmost finitely many components.
\end{abstract}
 \maketitle
\section{Introduction}
We use \cite{N} for notation on triangulated categories. However we will assume that if $\mathcal{C}$ is a triangulated category then $\Hom_\mathcal{C}(X, Y)$ is a set for any objects $X, Y$ of $\mathcal{C}$. Throughout $R$ is some commutative ring. All our additive categories will be $R$-categories ( for this notion see \cite[p.\ 28]{ARS}).

\subsection{} \emph{Direct Sum of triangulated categories:} \\
Let $\C, \D$ be additive $R$-categories. By $\T = \C \oplus \D$ we mean a category with
\begin{enumerate}
  \item  Objects of $\T$ are direct sums $U \oplus V$ where $U$ is an object of $\C$ and $V$ is an object in $\D$.
  \item Morphism are defined by $\Hom_\T(U_1\oplus V_1, U_2 \oplus V_2) = \Hom_\C(U_1, U_2)\oplus \Hom_\D(V_1, V_2)$.
\end{enumerate}
It is clear that $\T$ is an additive $R$-category.

If $\C, \D$  are triangulated $R$-categories then we can define a triangulated structure on $\T$ as follows:
First define the shift operator on $\T$ as $\Sigma_T  = \Sigma_\C \oplus \Sigma_\D$. We say a sequence
\[
U_1 \oplus V_1 \xrightarrow{(f_1, g_1)}  U_2 \oplus V_2 \xrightarrow{(f_2, g_2)} U_3 \oplus V_3 \xrightarrow{(f_3, g_3)} \Sigma_\C{U_1} \oplus \Sigma_\D(V_1),
\]
to be a distinguished triangle in $\T$ if
\[
U_1  \xrightarrow{f_1}  U_2  \xrightarrow{f_2} U_3 \xrightarrow{f_3} \Sigma_\C{U_1} ,
\]
is a distinguished triangle in $\C$ and
\[
V_1  \xrightarrow{g_1}  V_2  \xrightarrow{g_2} V_3 \xrightarrow{g_3} \Sigma_\D{V_1} ,
\]
is a distinguished triangle in $\D$.
It is a routine exercise that  these distinguished triangles give a triangulated structure on $\T$. Note $\C, \D$ are thick subcategories of $\T$.

Analogously one can define a triangulated structure on $ \bigoplus_{i \in \Gamma} \C_i$ where $\C_i$ are triangulated categories for all $i \in \Gamma$.

\begin{definition}\label{conn-cat}
  Let $\C$ be a triangulated category. We say $\C$ is \emph{connected} if
  \begin{enumerate}
    \item $\C \neq 0$.
    \item If $\C = \C_1 \oplus \C_2$ where $\C_1, \C_2$ are triangulated categories then either $\C_1 = 0$ or $\C_2 = 0$.
  \end{enumerate}
\end{definition}

Our categorical Krull-Remak-Schmidt theorem is as follows:
\begin{theorem}\label{KRS}
Let $\C_i$ and $\D_j$ be connected  triangulated $R$ categories with $i \in \Gamma$ and $j \in \Lambda$.
Suppose there is an equivalence of triangulated $R$-categories
\[
\Phi \colon \bigoplus_{i \in \Gamma}\C_i  \xrightarrow{\cong} \bigoplus_{j \in \Lambda}\D_j
\]
Then  there is a bijective function $\pi \colon \Gamma \rt \Lambda$ such that we have an equivalence $\C_i \cong \D_{\pi(i)} $ for all $i  \in \Gamma$.
\end{theorem}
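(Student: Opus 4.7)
The plan is to use $\Phi$ and a chosen quasi-inverse $\Psi$ to transport the $\D$-side $\Lambda$-decomposition to a family of triangulated endofunctors on $\C := \bigoplus_{i \in \Gamma}\C_i$, and then to invoke connectedness of each $\C_i$ to show that the family concentrates at a single index $\pi(i) \in \Lambda$. Concretely, for each $X \in \C$ the object $\Phi(X)$ has a canonical finitely-supported decomposition $\Phi(X) \cong \bigoplus_{j \in \Lambda} Y_j(X)$ with $Y_j(X) \in \D_j$, and for each $J \subseteq \Lambda$ I set
\[
G_J(X) := \Psi\Bigl(\bigoplus_{j \in J} Y_j(X)\Bigr).
\]
This is a triangulated endofunctor of $\C$ with a natural isomorphism $G_J \oplus G_{\Lambda \setminus J} \cong \mathrm{id}_\C$ and with $\Hom_\C(G_J X, G_{\Lambda \setminus J} X') = 0$ for all $X, X' \in \C$ (the latter from full faithfulness of $\Phi$ together with the target-side $\Hom$-vanishing between distinct $\D_j$'s).

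Now fix $i \in \Gamma$. Any direct summand in $\C$ of an object of $\C_i$ again lies in $\C_i$, so $G_J$ restricts to an endofunctor of $\C_i$. Put
\[
\C_i^J := \{X \in \C_i : G_{\Lambda \setminus J}(X) = 0\}.
\]
Standard checks show $\C_i^J$ is a triangulated subcategory and that $\C_i = \C_i^J \oplus \C_i^{\Lambda \setminus J}$ as triangulated $R$-categories; connectedness of $\C_i$ then forces one summand to vanish. Since every non-zero $X \in \C_i$ has non-empty (finite) $\Lambda$-support under $\Phi$, applying this with singletons $J = \{j\}$ forces a unique $\pi(i) \in \Lambda$ with $\Phi(\C_i) \subseteq \D_{\pi(i)}$.

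The symmetric argument applied to $\Psi$ yields $\pi' \colon \Lambda \rt \Gamma$ with $\Psi(\D_j) \subseteq \C_{\pi'(j)}$. From $\Phi\Psi \cong \mathrm{id}$ and the observation that a non-zero object of $\D_j$ cannot be isomorphic to one lying in $\D_k$ for $k \neq j$, one gets $\pi\pi' = \mathrm{id}_\Lambda$ and symmetrically $\pi'\pi = \mathrm{id}_\Gamma$, so $\pi$ is a bijection. The restriction $\Phi|_{\C_i} \colon \C_i \rt \D_{\pi(i)}$ is fully faithful; and for any $Y \in \D_{\pi(i)}$ the object $\Psi(Y)$ lies in $\C_{\pi'(\pi(i))} = \C_i$ with $\Phi(\Psi(Y)) \cong Y$, which gives essential surjectivity and hence an equivalence $\C_i \cong \D_{\pi(i)}$. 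The main obstacle is the middle step: verifying that the family $\{\C_i^J\}$ really yields a \emph{triangulated} direct-sum splitting of $\C_i$, not merely a full-subcategory decomposition. The $G_J$ are triangulated because they are compositions of triangulated functors, but one must still check that $\C_i^J$ is closed under triangles and shifts and that the $\Hom$-vanishing survives restriction to $\C_i$; once this is in place, the rest of the argument is formal.
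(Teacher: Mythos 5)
Your proof is correct, and it rests on the same underlying idea as the paper's --- that connectedness of $\C_i$ prevents its image under $\Phi$ from straddling two summands $\D_j$ --- but you implement it by \emph{pulling back} the target-side decomposition to the source through the endofunctors $G_J = \Psi\circ(\text{projection onto }J)\circ\Phi$, whereas the paper \emph{pushes forward}. Concretely, the paper notes that the essential image $\im\Phi_i$ of $\C_i$ is a connected thick subcategory of $\bigoplus_j\D_j$ and proves as its key claim (recorded afterwards as Lemma~\ref{conn-sum}) that any connected thick subcategory $U$ of a direct sum satisfies $U=\bigoplus_j(U\cap\D_j)$ and therefore lies inside a single $\D_j$; this needs no functorial apparatus, only that $U$ is closed under direct summands of its objects. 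Your route instead splits $\C_i$ itself as $\C_i=\C_i^J\oplus\C_i^{\Lambda\setminus J}$ for every $J\subseteq\Lambda$ and quotes connectedness of $\C_i$, which forces you to verify that each $G_J$ is triangulated, that $G_J\oplus G_{\Lambda\setminus J}\cong\mathrm{id}$, and that $\C_i^J$ is a triangulated subcategory --- all true, but heavier. The ``main obstacle'' you flag at the end is not in fact a gap: $\C_i$ is closed under direct summands taken in $\C$ (if $X\in\C_i$ and $X\cong Y\oplus Z$ in $\C$, comparing components shows $Y,Z\in\C_i$), each $G_J$ is triangulated as a composite of triangulated functors so $\C_i^J$ is closed under shifts and cones, and the $\Hom$-vanishing between $\C_i^J$ and $\C_i^{\Lambda\setminus J}$ is inherited from full faithfulness of $\Phi$ together with $\Hom$-vanishing between distinct $\D_j$'s; with those checks in hand your decomposition of $\C_i$ is a genuine triangulated direct sum in the sense of Definition~\ref{conn-cat}, and the rest of the argument (singletons, $\pi'\pi=\mathrm{id}$, essential surjectivity) is formal and matches the paper's Claim-2.
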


Theorem \ref{KRS} is interesting only if we can give a good stock of examples of triangulated categories which are a finite sum of connected triangulated categories.
We show that many triangulated categories which arise in theory of Artin Algebras, commutative algebra and algebraic geometry can be decomposed into a finite direct sum of connected triangulated categories.

A natural question is whether a triangulated category decomposes into a direct sum of connected triangulated categories. In this regard we prove the following result
\begin{theorem}\label{decomp}
Let $\C$ be a Krull-Remak-Schmidt triangulated category. Assume the class of thick subcategories of $\C$ is a set. Then $\C$ decomposes as a direct sum of connected triangulated categories.
\end{theorem}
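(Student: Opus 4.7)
The plan is to read the decomposition off from the indecomposables. Since $\C$ is Krull--Remak--Schmidt, every non-zero object decomposes, uniquely up to permutation and isomorphism, as a finite direct sum of indecomposables. On the collection of isomorphism classes of indecomposables of $\C$ I introduce the symmetric, reflexive relation $X \sim_0 Y$ meaning that $\Hom_\C(\Sigma^a X, \Sigma^b Y) \neq 0$ or $\Hom_\C(\Sigma^a Y, \Sigma^b X) \neq 0$ for some $a,b \in \Z$, and I take its transitive closure $\sim$. Note that $\Sigma X \sim X$ (using $\id_{\Sigma X}$), so $\sim$ is shift-invariant. Write $\{S_\alpha\}_{\alpha \in \Gamma}$ for the equivalence classes and let $\C_\alpha$ be the full subcategory of $\C$ whose objects are zero or finite direct sums of indecomposables lying in $S_\alpha$.

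The main technical step, and the one I expect to require the most care, is to verify that each $\C_\alpha$ is a thick subcategory. Closure under $\Sigma^{\pm 1}$ and direct summands is immediate from the definitions together with uniqueness of the KRS decomposition. For closure under cones, given a triangle $X \rt Y \xrightarrow{g} Z \xrightarrow{h} \Sigma X$ with $X,Y \in \C_\alpha$ and an indecomposable summand $Z'$ of $Z$ with projection $p \colon Z \rt Z'$, I would argue: either $p \circ g \neq 0$, in which case some indecomposable summand $Y_i \in S_\alpha$ admits a non-zero map to $Z'$ and hence $Z' \in S_\alpha$; or $p \circ g = 0$, in which case $p$ factors through $h$ via a non-zero map $q \colon \Sigma X \rt Z'$, so some summand $\Sigma X_j$ of $\Sigma X$ satisfies $\Hom_\C(\Sigma X_j, Z') \neq 0$ and therefore $Z' \sim_0 \Sigma X_j \sim X_j \in S_\alpha$.

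With thickness established, the remaining assertions are bookkeeping. Every object decomposes as $\bigoplus_\alpha U_\alpha$ with $U_\alpha \in \C_\alpha$ and only finitely many $U_\alpha$ non-zero, by grouping the indecomposable summands according to their $\sim$-class. For $U \in \C_\alpha$ and $V \in \C_\beta$ with $\alpha \neq \beta$, any morphism decomposes componentwise into maps between indecomposables in distinct classes, each of which is zero by the very definition of $\sim_0$. This gives $\C = \bigoplus_{\alpha \in \Gamma} \C_\alpha$ in the sense of Section~1.1.

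For connectedness of each $\C_\alpha$: the subcategory is non-zero by construction, and if one had $\C_\alpha = \Ac \oplus \Bc$ with $\Ac,\Bc$ non-zero triangulated subcategories, then $S_\alpha$ would split as the disjoint union of the indecomposables of $\Ac$ and those of $\Bc$; picking $X$ on one side and $Y$ on the other, any $\sim$-chain from $X$ to $Y$ inside $S_\alpha$ must contain consecutive $X_i,X_{i+1}$ on opposite sides, and for such a pair all $\Hom_\C(\Sigma^a X_i, \Sigma^b X_{i+1})$ and $\Hom_\C(\Sigma^a X_{i+1}, \Sigma^b X_i)$ vanish by the direct-sum hypothesis on $\Ac \oplus \Bc$, contradicting $X_i \sim_0 X_{i+1}$. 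The hypothesis that the thick subcategories of $\C$ form a set enters only to legitimise $\Gamma$ as an index set: the assignment $\alpha \mapsto \C_\alpha$ is injective, so $\Gamma$ is bounded in size by that set.
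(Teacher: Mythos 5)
Your proof is correct, and it takes a genuinely different route from the paper's. The paper's argument is top-down: it puts the set of connected thick subcategories under inclusion, invokes Zorn's Lemma to extract maximal elements, and then shows that distinct maximal connected thick subcategories are mutually $\Hom$-orthogonal and jointly exhaust $\C$. Your argument is bottom-up and constructive: you declare two indecomposables equivalent when they are linked by a finite chain of nonzero (possibly shifted) $\Hom$-sets, let $\C_\alpha$ be the additive closure of one class $S_\alpha$, and verify directly that each $\C_\alpha$ is a thick triangulated subcategory, that the $\C_\alpha$ are pairwise $\Hom$-orthogonal and exhaust $\C$ (both via uniqueness of KRS decompositions), and that each $\C_\alpha$ is connected (any splitting of $\C_\alpha$ would break a $\sim_0$-link). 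Your key technical step, closure of $\C_\alpha$ under cones, is handled correctly by the dichotomy on whether the projection to an indecomposable summand $Z'$ of the cone kills $g$, using the factorization through $h$ in the second case; this is exactly the right move. The two approaches buy different things: yours avoids Zorn's Lemma and makes the connected components completely explicit (they are equivalence classes of indecomposables under the transitive closure of a nonvanishing-Hom relation), which clarifies the structure and would make it easier to compute the decomposition in examples; the paper's is slicker to state but nonconstructive. In both arguments the ``thick subcategories form a set'' hypothesis plays precisely the same auxiliary role of taming the size of the index class $\Gamma$.
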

\begin{remark}\label{skel}
  If $\C$ is a skeletally small  triangulated category then note that the class of thick subcategories of $\C$ is a set.
\end{remark}
Perhaps the assumption on $\C$ to be KRS is bit strong. We also prove
\begin{theorem}\label{decomp-A}
Let $(A,\m)$ be a Noetherian local ring.
Let $\C$ be an $A$-linear triangulated category such that $\Hom_\C(X, Y)$ is a finitely generated $A$-module for any $X, Y \in \C$. Assume the class of thick subcategories of $\C$ is a set. Then $\C$ decomposes as a direct sum of connected triangulated categories.
\end{theorem}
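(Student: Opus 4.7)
The plan is to mirror the proof of Theorem~\ref{decomp}, replacing the ``indecomposable object with local endomorphism ring'' that KRS provides by the notion of a \emph{centrally indecomposable} object, meaning one whose endomorphism ring has connected center. This substitution is available because the hypothesis forces $E_X := \Hom_\C(X,X)$ to be a finitely generated $A$-module, hence a Noetherian $A$-algebra. Its center $Z_X$ is a finitely generated commutative $A$-algebra, so $\mathrm{Spec}(Z_X)$ is a Noetherian topological space and therefore has only finitely many connected components. This furnishes a finite orthogonal family $e_1,\ldots,e_n$ of primitive central idempotents of $E_X$ summing to $\mathrm{id}_X$, which should split $X$ as $X\cong X_1\oplus\cdots\oplus X_n$ with each $X_i$ centrally indecomposable.

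With such a decomposition of objects in hand, I would define an equivalence relation $\sim$ on isomorphism classes of nonzero centrally indecomposable objects by taking the transitive closure of the relation ``some shifted Hom group $\Hom_\C(X, \Sigma^k Y)$ or $\Hom_\C(\Sigma^k Y, X)$ is nonzero.'' For each $\sim$-class $\alpha$, let $\C_\alpha \subseteq \C$ be the full subcategory on $0$ together with every object that admits a decomposition into centrally indecomposables all of which lie in $\alpha$. A routine check shows each $\C_\alpha$ is thick: closure under shifts is clear; for cones one uses a triangle rotation together with the observation that if $X_1 \to X_2 \to X_3 \to \Sigma X_1$ is a triangle with $X_1,X_2 \in \C_\alpha$, then the long exact Hom sequence against any centrally indecomposable $Y \notin \alpha$ forces $\Hom_\C^*(X_3,Y) = \Hom_\C^*(Y,X_3) = 0$. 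Hom-orthogonality $\Hom_\C(\C_\alpha,\C_\beta)=0$ for $\alpha \neq \beta$ follows similarly, since a nonzero morphism would have a nonzero component between centrally indecomposable summands and hence force the two classes to coincide. Combining this with the decomposition of each object into centrally indecomposables, grouped by $\sim$-class, yields the desired equivalence $\C\cong\bigoplus_\alpha \C_\alpha$, and each $\C_\alpha$ is connected by transitivity of $\sim$.

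The main obstacle is the very first step: in a triangulated category idempotents need not split, so the central idempotents $e_i$ may fail to decompose $X$ literally inside $\C$. My plan to handle this is to first carry out the whole construction inside the Balmer--Schlichting idempotent completion $\wt\C$ of $\C$, which is again triangulated, still $A$-linear, still has finitely generated Hom groups over $A$, and still has a small class of thick subcategories, and then argue that the components $\wt\C_\alpha$ restrict to thick subcategories of $\C$ that realise the required decomposition. The Noetherian-local hypothesis on $A$ and the finite-generation of each $Z_X$ over $A$ should be precisely what forces finitely many components and controls the descent from $\wt\C$ back to $\C$, but making this bookkeeping tight is the delicate heart of the argument.
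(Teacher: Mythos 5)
Your route is genuinely different from the paper's. The paper first proves a \emph{Claim}: any $X\in\C$ is a \emph{finite direct sum of indecomposables}, using the elementary observation that a direct--sum decomposition $X=U_1\oplus\cdots\oplus U_n$ into nonzero objects forces $\mu(\Hom_\C(X,X))\geq n$, where $\mu$ denotes minimal number of generators over the local ring $A$. It then notes that $\thick(X)$ is connected whenever $X$ is indecomposable, and runs the Zorn's--lemma argument from Theorem~\ref{decomp}: maximal connected thick subcategories exist, distinct maximals are Hom--orthogonal, and they cover $\C$. You instead try to \emph{construct} the decomposition directly by grouping ``centrally indecomposable'' objects along an equivalence relation $\sim$.

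The gap in your argument is exactly where you flag it, and it is not cosmetic. Splitting the primitive central idempotents $e_i\in Z(\Hom_\C(X,X))$ requires idempotents to split in $\C$, which is not a hypothesis of the theorem and is genuinely false for a general triangulated category (there is no countable--coproduct assumption here to invoke B\"okstedt--Neeman). Passing to the idempotent completion $\wt\C$ does let you build $\wt\C=\bigoplus_\alpha\wt\C_\alpha$, but the descent back to $\C$ does not go through as stated: for $X\in\C$, the pieces $X^{(\alpha)}$ of the $\wt\C$--decomposition need not lie in $\C$, and the operations you propose ($\wt\C_\alpha\cap\C$) need not produce a direct--sum decomposition of $\C$ at all. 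You correctly identify this as ``the delicate heart,'' but the argument stops there, so the proof is incomplete.

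There is also an unnecessary detour: you do not need the center $Z_X$, its $\Spec$, or its connected components. The finite--generation hypothesis over the local ring $A$ already yields, within $\C$ and with no idempotent completion, that every object is a finite direct sum of honest indecomposable objects --- this is precisely the paper's Claim, via the bound $\mu(\Hom_\C(X,X))\geq n$. If you replace ``centrally indecomposable'' by ``indecomposable'' and use that Claim, your equivalence--class construction can be made to work inside $\C$ directly (orthogonality and thickness of $\C_\alpha$ go through as you sketch, closure under summands follows since a summand of an $\alpha$--object has all its indecomposable pieces forced into $\alpha$ by orthogonality, and $\C=\bigoplus_\alpha\C_\alpha$ because the $\alpha$--grouping of an indecomposable decomposition stays in $\C$). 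That would give a clean, Zorn--free alternative to the paper's argument. As written, however, the proposal has a real gap at the descent step and relies on an idempotent--splitting assumption that is not available.
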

As a consequence of Theorem \ref{decomp-A} and remark \ref{skel} we obtain
\begin{corollary}
  Let $k$ be a field.
Let $\C$ be a Hom-finite $k$-linear triangulated category. Assume  $\C$ is skeletally small. Then $\C$ decomposes as a direct sum of connected triangulated categories.
\end{corollary}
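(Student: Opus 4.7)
The plan is to deduce this corollary as a direct specialization of Theorem~\ref{decomp-A}. The observation driving the proof is that a field $k$ is itself a Noetherian local ring (with maximal ideal $\m = 0$), so $A = k$ is a legitimate choice of base ring for Theorem~\ref{decomp-A}. It then suffices to verify the two remaining hypotheses of that theorem.

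First I would translate the Hom-finiteness assumption: by definition, $\Hom_\C(X,Y)$ is a finite-dimensional $k$-vector space for every $X, Y \in \C$. Over a field, finite-dimensionality coincides with being a finitely generated module, so the module-finiteness hypothesis of Theorem~\ref{decomp-A} is automatically met. Next I would invoke Remark~\ref{skel}: since $\C$ is skeletally small, the class of its thick subcategories is in bijective correspondence with a class of subsets of a small skeleton of $\C$, hence forms a set. This supplies the final hypothesis of Theorem~\ref{decomp-A}.

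With both hypotheses verified, applying Theorem~\ref{decomp-A} with $A = k$ yields the desired decomposition of $\C$ as a direct sum of connected triangulated categories. There is no genuine obstacle here; the entire content of the corollary lies in the two preceding results, and the proof amounts to checking that Hom-finiteness over a field is a special case of finite generation over a Noetherian local ring and that skeletal smallness implies the set-theoretic hypothesis on thick subcategories.
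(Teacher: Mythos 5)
Your proposal is exactly the paper's own argument: take $A=k$ in Theorem~\ref{decomp-A}, note that Hom-finiteness over a field is precisely finite generation over the Noetherian local ring $k$, and invoke Remark~\ref{skel} to get that the thick subcategories form a set. Correct and identical in approach.
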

Here is an overview of the contents of this paper. In section two we give a proof of Theorem \ref{KRS}. In the next section we give some examples of triangulated categories which decompose into finitely many subcategories. In section four we give some examples of connected categories. In section five we prove Theorems \ref{decomp} and \ref{decomp-A}. Finally in the last section we give an example of a triangulated category which decomposes into infinitely many components.
\section{Proof of Theorem \ref{KRS}}
In this section we give
\begin{proof}[Proof of Theorem \ref{KRS}]
Let $U \subseteq \bigoplus_{j \in \Lambda}\D_j $ be a \emph{connected} thick subcategory.\\
\emph{Claim-1:} $U \subseteq \D_j$ for some $j$.\\
Let $U_i$ be the full subcategory consisting of objects in $\D_i$ which are contained in $U$.
Then $U_i$ is triangulated. We assert that
$$U  = \bigoplus_{j \in \Lambda}U_j.$$
Indeed it is clear that $U  \supseteq \bigoplus_{j \in \Lambda}U_j.$
 To see the reverse inclusion let $X$ be an object in $U$.
Then $X = X_1 \oplus X_2 \oplus \cdots \oplus X_m$ where $X_i $ is an object in $\D_{j_i}$. As $U$ is thick it contains all direct summands of its objects. So $X_i \in U_{j_i}$. Thus our assertion holds.
As $U$ is connected it follows that $U = U_j$ for some $j$ and $U_i =  0$ for $i \neq j$. This proves  Claim=1.

Fix $i$. Let $\im \Phi_i $ be the essential image of $\C_i$ in  $\bigoplus_{j \in \Lambda}\D_j$. Then by our claim $ \im_\Phi  \subseteq \D_{j}$ for some $j$. As $\Phi$ is an equivalence we get $\Phi$ restricts to an equivalence $\C_i \rt \im \Phi_i$.\\
\emph{Claim-2:} $\im \Phi_i =  \D_j$ .\\
Indeed consider the inverse $\Phi^{-1}$. Notice $\Phi^{-1}(\D_j) \cap \C_i \neq 0$.  It follows from Claim-1 that $\Phi^{-1}(\D_j) \subseteq \C_i$. Claim-2  follows.

Set $\pi(i) = j$. It follows that $\pi$ is bijective and $\C_i \cong \D_{\pi(i)}$ as triangulated categories.
\end{proof}
Our proof of Theorem \ref{KRS} also shows the following
\begin{lemma}\label{conn-sum}
Let $U$ be a connected triangulated category. Suppose $U \subseteq \D$ is thick in $\D$. Suppose $\D  =  \bigoplus_{j \in \Lambda}\D_j$. Then $U \subseteq \D_i$ for some $i$.
\end{lemma}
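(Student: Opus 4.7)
My plan is to adapt the argument from Claim-1 in the proof of Theorem~\ref{KRS}, since the lemma is essentially that intermediate fact extracted into an independent statement. The strategy is to intersect $U$ with each component $\D_j$, show that $U$ decomposes as the direct sum of these intersections inside $\D$, and then invoke connectedness of $U$ to kill all but one of them.

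For each $j \in \Lambda$, let $U_j$ be the full subcategory of $U$ consisting of objects of $U$ that also lie in $\D_j$. First I would check that each $U_j$ is a triangulated subcategory of $U$. The shift of an object of $\D_j$ remains in $\D_j$, and a cone of a morphism between two objects of $\D_j$ can be chosen to sit in $\D_j$ (distinguished triangles in $\D$ decompose componentwise by construction of the direct-sum triangulated structure); such a cone lies in $U$ because $U$ is itself triangulated in $\D$.

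Next I would establish the decomposition $U = \bigoplus_{j \in \Lambda} U_j$. The inclusion $\supseteq$ is immediate. For the reverse, given $X \in U$, view $X$ as an object of $\D$; by the definition of $\D = \bigoplus_j \D_j$, $X$ admits a finite decomposition $X = X_{j_1} \oplus \cdots \oplus X_{j_m}$ with $X_{j_k} \in \D_{j_k}$. Because $U$ is thick, it is closed under direct summands in $\D$, so each $X_{j_k}$ lies in $U$ and therefore in $U_{j_k}$, giving $X \in \bigoplus_j U_j$.

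The only mildly subtle step, which I expect to be the main obstacle, is applying connectedness: the definition of connected is phrased for binary sums, while $\Lambda$ may be infinite. To handle this I would argue that if two indices $j_1 \neq j_2$ had both $U_{j_1}$ and $U_{j_2}$ non-zero, then $U = U_{j_1} \oplus \bigl(\bigoplus_{j \neq j_1} U_j\bigr)$ exhibits $U$ as a non-trivial direct sum of two non-zero triangulated subcategories (the second summand itself carries the induced direct-sum triangulated structure), contradicting connectedness. Since $U \neq 0$, exactly one $U_i$ is non-zero, and $U = U_i \subseteq \D_i$, finishing the proof.
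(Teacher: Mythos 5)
Your proof is correct and follows essentially the same route as the paper, which explicitly notes that Lemma \ref{conn-sum} is obtained by extracting Claim-1 from the proof of Theorem \ref{KRS}: define the components $U_j = U \cap \D_j$, show $U = \bigoplus_j U_j$ by using thickness of $U$ to capture direct summands, and invoke connectedness. Your extra remark on reducing the possibly infinite decomposition to the binary case in Definition \ref{conn-cat} is a sound clarification of a step the paper leaves implicit, not a different approach.
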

\begin{remark}
  In Lemma \ref{conn-sum} we do not insist that $\D_i$ are also connected.
\end{remark}
\section{Examples}
We give a large class of examples of triangulated categories $\C$ where we can show that $\C$ is a direct sum of utmost finitely many connected categories. It takes some effort to prove whether a  triangulated category is connected.

\textbf{I}. If a triangulated category $\C$ has no proper thick subcategories then clearly $\C$ is connected. \\
Examples:
\begin{enumerate}
  \item Let $(A,\m)$ be a Noetherian local ring and let $K^b_f(\proj A)$ be the bounded  homotopy category of finitely generated projective $A$-modules with finite length cohomology. Then $K^b_f(\proj A)$ has no proper thick subcategories, see \cite[1.2]{N-kproj}

  \item Let $(A,\m)$ be a hypersurface singularity (i.e., the completion $\wh{A} = Q/(f)$ where $(Q,\n)$ is regular local and $f \in \n^2$). Let $\CMS(A)$ be the stable category of maximal \CM \ $A$-modules and let $\CMS^0(A)$ be the thick subcategory of MCM $A$-modules which are free on the punctured spectrum of $A$. Then $\CMS^0(A)$ has no proper thick subcategories, \cite[6.6]{T-thick}
\end{enumerate}

\textbf{II}  Let $\C$ be a Krull-Remak-Schmidt triangulated category. Let $X \in \C$ be non-zero. Then clearly $\D = \thick(X)$ is utmost a direct sum of $r$ subcategories where $r$ is the number of indecomposable summands of $X$.
Examples :
\begin{enumerate}
  \item Let $A$ be a $R$-Artin algebra where $R$ is a commutative Artin ring.  Let $D^b(\mmod(A))$ be the bounded derived category of $\mmod(A)$; the category of all finitely generated $A$-modules. Then it is well-known that $D^b(\mmod(A))$ is a Krull-Remak-Schmidt triangulated category. Furthemore $D^b(\mmod(A)) = \thick(A/\rad(A))$ where $\rad(A)$ is the radical of $A$, see \cite[7.37]{RR}. In particular if $A$ is local then $D^b(\mmod(A))$ is connected.
  \item Let $(A,\m)$ be a Henselian Gorenstein local ring of dimension $d$.  Let $\CMS(A)$ be the stable category of maximal \CM \ $A$-modules and let $\CMS^0(A)$ be the thick subcategory of MCM $A$-modules which are free on the punctured spectrum of $A$. Then $\CMS^0(A) = \thick(\Omega^d_A(k))$ where $\Omega^d_A(k)$ is the $d^{th}$-syzygy of $k$, see \cite[2.6, 2.9]{T-thick}. Thus $\CMS^0(A)$ is a finite direct sum of connected subcategories. If $\Omega^d(k)$ is indecomposable then $\CMS^0(A)$ is connected. Note when $d = 0$ clearly $\Omega^0_A(k) = k$ is indecomposable. If the multiplicity of $A$ is at least three and $d = 1, 2$ then $\Omega^d_A(k)$ is indecomposable, see \cite[Theorems A, B]{Taka}.
\end{enumerate}

\textbf{III:} Let      $\Ac$ be an Abelian category. Let $\Bc$ be a Krull-Remak-Schmidt  Serre-subcategory of $\Ac$. Let $D^b_\Bc(\Ac)$ be the bounded derived category of $\Ac$ with cohomology in $\B$. Let $X \in D^b_\Bc(\Ac)$ and let $r = $ number of irreducible summands of $H^*(X)$. Then it is clear that $X$ cannot be a direct sum of $r + 1$ non-zero elements in $\D^b_\Bc(\Ac)$. It follows that $\thick(X)$ is a finite direct sum of up to $r$ connected summands.
     Examples:
     \begin{enumerate}
       \item Let $V$ be a projective variety over an algebraically closed field $k$. Let $\Ac = \coh(V)$ be the  category of coherent sheaves on $V$. Then $\Ac$ is a Krull-Schmidt category, see \cite{At}.
         By \cite[7.38]{RR},  $D^b(\Ac) = \thick(E)$ for some $E$. Thus $D^b(\Ac)$ is a finite direct sum of connected triangulated categories.
       \item Let $A$ be a Noetherian ring. Let $\Ac = \mmod(A)$ and let $\Bc$ be its Serre-subcategory consisting of modules of finite length.
     \end{enumerate}

\textbf{IV:} Let $A$ be a Noetherian ring of finite global dimension. Then $D^b(\mmod(A)) = \thick(A)$.  If $A$ is indecomposable then $D^b(\mmod(A))$ is connected.
\begin{enumerate}
  \item If $A$ is a commutative domain then $A$ is indecomposable. So $D^b(\mmod(A))$ is connected.
  \item If $A = A_n(K)$ the $n^{th}$-Weyl Algebra over a field $K$ of characteristic zero. By \cite{Roos} $A$ has finite global dimension. We claim that $A$ is indecomposable. Suppose if possible $A$ is decomposable. Say $A = U \oplus V$ where $U, V$ are left modules. Let $\mathcal{F}$ be the Bernstein filtration on $A$. Then $G_\mathcal{F}(A) = K[X_1,\ldots, X_n, \ov{\partial_1}, \cdots, \ov{\partial_n}]$ is a polynomial ring in $2n$-variables, see \cite[I.2.2]{Bjork}. By considering the induced filtration on $U$ we get that its associated is a submodule of $G_\mathcal{F}(A)$ which is a domain. So $\dim U = 2n$. Similarly $\dim V = 2n$. So the multiplicity $e(A) = e(U) + e(V) \geq 2$, a contradiction since multiplicity of $A$ is one. Thus $A$ is indecomposable. So $D^b(\mmod A)$ is connected.
\end{enumerate}

\section{Some examples of connected categories}
In this section we prove that some natural triangulated categories are connected.

\subsection{}\label{bridge} Let $X$ be a Noetherian scheme. Let $D^b(X)$ be the bounded derived category ofcoherent sheaves on $X$. Then
$D^b(X)$ is connected  if and only if $X$ is connected, see \cite[3.2]{Br}.

\subsection{} The technique to show some triangulated categories   are connected is based on the following:
\begin{lemma}\label{conn-lemm} Let $\C$ be a triangulated category. Let $\D$ be a triangulated subcategory of $\C$ such that
\begin{enumerate}[\rm (1)]
  \item $\D$ is thick.
  \item $\D$ is connected.
  \item If $\mathcal{T} $ is a non-zero thick subcategory of $\C$ then $\D \cap \mathcal{T} \neq 0$.
\end{enumerate}
Then $\C$ is also connected.
\end{lemma}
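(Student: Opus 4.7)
The plan is to argue by contradiction. Suppose $\C = \C_1 \oplus \C_2$ with both $\C_i$ non-zero triangulated subcategories. As noted in the preliminary subsection on direct sums of triangulated categories, each $\C_i$ is a thick subcategory of $\C$, so hypothesis (3) yields that $\D_i := \D \cap \C_i$ is non-zero for $i = 1, 2$. The goal is to upgrade this to an actual direct sum decomposition of $\D$, which will contradict the connectedness assumption (2).

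The heart of the argument is showing $\D = \D_1 \oplus \D_2$ in the sense of the preliminary subsection. First I would check that each $\D_i$ is a triangulated subcategory of $\D$: closure under shifts is immediate, and given a distinguished triangle in $\C$ whose first two vertices lie in $\D_i$, the third vertex lies in $\D$ (as $\D$ is triangulated) and in $\C_i$ (as $\C_i$ is triangulated), hence in $\D_i$. Next, for any $X \in \D$, the ambient decomposition $\C = \C_1 \oplus \C_2$ expresses $X$ as $X_1 \oplus X_2$ with $X_i \in \C_i$; since $\D$ is thick in $\C$ and each $X_i$ is a direct summand of $X \in \D$, we get $X_i \in \D$, and hence $X_i \in \D_i$. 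The vanishing $\Hom_\D(U,V) = 0$ for $U \in \D_1$ and $V \in \D_2$ is inherited from the corresponding vanishing in $\C$, and the distinguished triangles of $\D$ of the required form are inherited from those of $\C$ in exactly the same way. This produces the desired decomposition $\D = \D_1 \oplus \D_2$ with both summands non-zero, contradicting the connectedness of $\D$.

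Conceptually the argument is short, and I do not anticipate a genuine obstacle. The only mildly delicate point is bookkeeping: one must confirm that the induced decomposition of $\D$ really matches the definition of direct sum of triangulated categories from the preliminaries (correct triangulated structure and Hom-vanishing axioms), so that invoking connectedness of $\D$ is legitimate. Once this verification is in place, the lemma follows immediately.
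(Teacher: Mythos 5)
Your proof is correct and takes essentially the same approach as the paper: suppose $\C = \C_1 \oplus \C_2$ with both factors non-zero, and derive a contradiction with the connectedness of $\D$. The paper packages the key step --- that a connected thick subcategory of a direct sum must lie entirely in one summand --- as a separate statement (Lemma~\ref{conn-sum}), whose proof is exactly the direct-sum decomposition you verify inline; apart from that modularization the arguments are the same.
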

\begin{proof}
  Suppose if possible $\C$ is not connected. So $\C = \C_1\oplus \C_2$ where $\C_i \neq 0$. By \ref{conn-sum} we get that $\D \subseteq \C_i$ for some $i$. But by (3) of our hypotheses $\D \cap \C_j \neq 0$ for both $j = 1, 2$, a contradiction.
\end{proof}

We first prove
\begin{proposition}\label{der-local}
Let $(A,\m)$ be a Noetherian local ring.  Let $D^b(\mmod(A))$ be the bounded derived category of $\mmod(A)$; the abelian  category of all finitely generated $A$-modules. Then $D^b(\mmod(A))$ is connected.
\end{proposition}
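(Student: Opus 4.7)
The plan is to apply Lemma~\ref{conn-lemm} to $\C = D^b(\mmod(A))$ with the auxiliary subcategory $\D = \thick(k)$, where $k = A/\m$. Two of the three hypotheses of that lemma are immediate. First, $\D$ is thick by construction. For the connectedness of $\D$, note that $k$ is indecomposable as an $A$-module, so in any decomposition $\D = \D_1 \oplus \D_2$ into triangulated subcategories the object $k$ must lie entirely in one summand; hence $\thick(k)$ is contained in that summand and the other summand vanishes.

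The substantive point is hypothesis~(3): every non-zero thick subcategory $\T \subseteq \C$ meets $\D$ non-trivially. Given $0 \neq X \in \T$, I would fix a system of parameters $x_1, \ldots, x_d$ of $A$ and form the iterated cones
\[
Y_0 = X, \qquad Y_j = \mathrm{cone}\bigl(x_j \colon Y_{j-1} \to Y_{j-1}\bigr), \qquad Y := Y_d.
\]
Using the general identity $\mathrm{cone}(a \colon U \to U) \cong U \otimes^L_A K(a)$ applied iteratively, one identifies $Y$ with $X \otimes^L_A K(x_1, \ldots, x_d)$. Each $Y_j$ remains in $\T$ by thickness; and $Y$ is a bounded complex of finitely generated modules whose cohomology is annihilated by a power of $(x_1, \ldots, x_d) \subseteq \m$, hence of finite length. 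Since every finite length module is an iterated extension of $k$, the complex $Y$ lies in $\thick(k) = \D$.

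It remains to prove $Y \neq 0$. Let $i$ be the largest integer with $H^i(X) \neq 0$ and set $M = H^i(X)$. Since $K(x_1, \ldots, x_d)$ is concentrated in cohomological degrees $[-d,0]$ with $H^0 = A/(x_1, \ldots, x_d)$, a filtration-by-truncation argument shows that $(\tau_{<i}X) \otimes^L K$ has cohomology only in degrees $\leq i-1$, while $M[-i] \otimes^L K$ has top cohomology $M/(x_1, \ldots, x_d)M$ in degree $i$. The long exact sequence associated to the truncation triangle $\tau_{<i}X \to X \to M[-i] \to$ then forces
\[
H^i(Y) \;\cong\; M/(x_1, \ldots, x_d)M,
\]
which is non-zero by Nakayama. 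Thus $Y \in \T \cap \D$ is non-zero, and Lemma~\ref{conn-lemm} completes the proof.

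The main obstacle I anticipate is precisely this non-vanishing step: one must correctly identify the iterated cones with $X \otimes^L_A K(\underline{x})$ and track cohomological degrees through the truncation triangle carefully enough to see that no cancellation occurs in top degree. Both points are standard but easy to mishandle; everything else is formal.
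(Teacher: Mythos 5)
Your proposal is correct and hinges on the same key lemma as the paper (Lemma~\ref{conn-lemm} applied with $\D = \thick(k)$, which is precisely the subcategory $\C_f$ of complexes with finite length cohomology used in the paper), but both sub-steps are executed differently, and in ways worth noting. For the connectedness of $\D$, the paper works with minimal complexes of projectives: it fixes a minimal free resolution $X$ of $k$ in a putative summand $\D_1$, takes a nonzero minimal $Y$ in $\D_2$, and lifts a surjection $H^0(Y) \twoheadrightarrow k$ to a nonzero chain map $Y \to X$, contradicting $\Hom(\D_2,\D_1)=0$. Your argument instead notes that $k$ is an indecomposable object of the derived category and that the thick hull of an indecomposable object can never split nontrivially; this is shorter and more formal, and it is in fact the same observation the paper uses later in the proof of Theorem~\ref{decomp} (``if $X$ is indecomposable then $\thick(X)$ is connected''). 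For the third hypothesis, the paper chooses $Y \in \T$ with $\dim\Supp(H^*(Y))$ minimal and argues by contradiction with a single cone on a well-chosen $x\in\m$ to drop dimension, whereas you directly Koszul-cone against an entire system of parameters and then verify non-vanishing of the result by identifying the top cohomology of $X\otimes^L_A K(\underline{x})$ with $H^{\max}(X)/(\underline{x})H^{\max}(X)$ and invoking Nakayama. Your version avoids the minimality/induction bookkeeping at the cost of the right-exactness-at-top-degree computation, which you set up correctly; the two are essentially interchangeable. One small point you gloss over but which is standard dévissage: passing from ``$H^*(Y)$ has finite length'' to ``$Y\in\thick(k)$'' uses truncation triangles plus the fact that each finite length module is an iterated extension of $k$ — worth a sentence in a final write-up.
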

\begin{remark}
This also follows from \ref{bridge} as $Spec(A)$ is connected. However we give a purely algebraic proof for the convenience of algebraists reading this paper.
\end{remark}
We now give:
\begin{proof}[Proof of \ref{der-local}]
We identify $D^b(\mmod(A)) $ with $\C = K^{b,*}(\proj A)$ the category of \\  bounded above complexes of finitely generated projective $A$-modules with bounded cohomology.  Let $\C_f$ be complexes in $\C$ with finite length cohomology. It is clear that $\C_f$ is a thick subcategory of $\C$.

We first show that $\C_f$ is connected. Suppose if possible  $\C_f$ is not connected. Say $\C_f = \D_1 \oplus \D_2$ where $\D_i \neq 0$. Let $X$ be a minimal free resolution of $k = A/\m$. Then $X$ is indecomposable. Say $X \in \D_1$. Let $Y \in \D_2$ be non-zero. We may assume $Y$ is a minimal complex. After shifting we may assume $Y^i = 0$ for $i > 0$ and $Y^0 \neq 0$. As $Y$ is a minimal complex we get $Z = H^0(Y) \neq 0$. Note $Z$ is a non-zero finite length module. Let $\epsilon \colon Z \rt k$ be any surjective map. Then $\epsilon$ has a lift $\wt{\epsilon} \colon Y \rt X$. Note $\wt{\epsilon} \neq 0$ as the map on zeroth cohomology is non-zero. This contradicts the fact that $\Hom(D_1, D_2) = 0 $ for $D_i \in \D_i$. Thus $\C_f$ is connected.

For $Z \in \C$ let $\Supp(Z) = \bigcup_{i \in \Z} \Supp_A H^i(Z)$. Set $\dim Z = \dim \Supp(Z)$.

Let $\T$ be a non-zero thick subcategory of $\C$. We show $\C_f \cap \T \neq 0$.
 Let $Y \in \T$ be such that
$$ \dim Y = \min\{ \dim Z \mid Z \in \T \ \text{and} \ Z \neq 0 \}. $$
We assert $\dim Y = 0$. If not choose $x \in \m$ such that
$$ \ker \{  H^*(Y) \xrightarrow{x} H^*(Y) \}  \ \quad \text{has finite length.}$$
Let $Y \xrightarrow{x} Y \rt Z \rt Y[1]$ be a triangle. Then $Z \in \T$ as $\T$ is thick. Note by construction $\dim Z \leq \dim Y -1$. Note if $Z = 0$ then the map $H^*(Y) \xrightarrow{x} H^*(Y)$
is surjective and so by Nakayama Lemma $H^*(Y) = 0$ and so $Y= 0$, a contradiction.  Thus we have $Z \in \T$, $Z \neq 0$ with $\dim Z < \dim Y$, which is a contradiction. So $\dim Y = 0$. Thus
$Y \in \C_f \cap \T$ and $Y \neq 0$.

By Lemma \ref{conn-lemm} we get that $\C$ is connected.
\end{proof}
The following result can be shown in a similar manner to \ref{der-local}. So we only sketch a proof.
\begin{proposition}
  \label{hom-connected}
  Let $(A,\m)$ be a Noetherian local ring. Let $K^b(\proj A)$ be the homotopy category of bounded complexes of finitely generated free $A$-modules. Then $K^b(\proj A)$ is connected.
\end{proposition}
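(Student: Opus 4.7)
The plan is to prove this by the same template as Proposition \ref{der-local}: apply Lemma \ref{conn-lemm} to $\C = K^b(\proj A)$ and the thick subcategory $\D = K^b_f(\proj A)$ of bounded complexes of finitely generated free $A$-modules with finite length cohomology. Thickness of $\D$ in $\C$ is immediate, so hypothesis (1) is free. For hypothesis (2), rather than repeating the Hom-nonvanishing argument of \ref{der-local} (which used the minimal free resolution of $k$, unavailable here because that resolution is unbounded for non-regular $A$), I would invoke Neeman's result \cite[1.2]{N-kproj}, already recalled in example I.(1): $K^b_f(\proj A)$ has no proper thick subcategories. Any non-zero triangulated category with no proper thick subcategories is connected, because a decomposition $\D = \D_1 \oplus \D_2$ with both summands non-zero would exhibit $\D_1$ as a proper non-zero thick subcategory.

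Hypothesis (3), that every non-zero thick subcategory $\T \subseteq \C$ meets $\D$, is the main content and would be shown by the same dimension-reduction used in \ref{der-local}. For $Z \in \C$ set $\dim Z = \dim \bigcup_i \Supp_A H^i(Z)$, a non-negative integer since each $H^i(Z)$ is finitely generated and only finitely many are non-zero. Pick $Y \in \T$ non-zero with $\dim Y$ minimal; the goal is to show $\dim Y = 0$, which places $Y$ in $\D \cap \T$. If $\dim Y \geq 1$, then $\m$ is not contained in the union of those associated primes of $H^*(Y) = \bigoplus_i H^i(Y)$ distinct from $\m$ (a finite set), so by prime avoidance I can choose $x \in \m$ such that the kernel of multiplication by $x$ on $H^*(Y)$ has finite length. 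The mapping cone $Z$ of $Y \xrightarrow{x} Y$ lies in $\T$, and the long exact cohomology sequence shows $\dim Z \leq \dim Y - 1$. If $Z = 0$, then $x$ is surjective on $H^*(Y)$; by Nakayama $H^*(Y) = 0$, so $Y$ is an acyclic bounded complex of projectives, hence contractible, i.e.\ $Y = 0$ in $K^b(\proj A)$, a contradiction. Therefore $Z$ is a non-zero object of $\T$ of strictly smaller dimension, contradicting minimality. Lemma \ref{conn-lemm} then delivers connectedness of $\C$.

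The one point that needs care when passing from the derived to the homotopy setting is precisely the step where acyclicity of $Y$ forces $Y = 0$: in $K^b(\proj A)$ this still holds because a bounded acyclic complex of projectives is contractible, so the Nakayama argument survives and the dimension induction is not disturbed. The genuinely new ingredient relative to \ref{der-local} is the appeal to \cite[1.2]{N-kproj} to establish connectedness of the test subcategory $\D$, which I expect to be the only non-routine input to the whole argument.
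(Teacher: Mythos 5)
Your proposal is correct and follows exactly the route the paper takes: apply Lemma \ref{conn-lemm} with $\D = K^b_f(\proj A)$, use Neeman's \cite[1.2]{N-kproj} to get connectedness of $\D$, and run the dimension-reduction argument from \ref{der-local} for hypothesis (3). The paper only sketches this proof; your write-up supplies the details (including the useful observation that a bounded acyclic complex of projectives is contractible, so the Nakayama step still forces $Y = 0$ in the homotopy category), but the strategy is the same.
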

\begin{proof}
  \emph{Sketch} Let $K^b_f(\proj A)$ be the bounded  homotopy category of finitely generated projective $A$-modules with finite length cohomology. Then $K^b_f(\proj A)$ has no proper thick subcategories, see \cite[1.2]{N-kproj}. So $K^b_f(\proj A)$ is connected. Also clearly $K^b_f(\proj A)$ is thick in $K^b(\proj A)$.

   An argument similar to one in \ref{der-local} shows that if $\T$ is a non-zero thick subcategory of $K^b(\proj A)$ then $K^b_f(\proj A) \cap \T \neq 0.$

   Thus by Lemma \ref{conn-lemm} we get that $K^b(\proj A)$ is connected.

\end{proof}
Let $(A,\m)$ be a local Gorenstein ring. Let $\CMS(A)$ be the stable category of maximal \CM \ $A$-modules. Let $\CMS^0(A)$ be the subcategory of MCM $A$-modules which are free on the punctured spectrum of $A$. Then it is easy to check that $\CMS^0(A)$ is a thick subcategory of $\CMS(A)$.
We show
\begin{proposition}
  \label{stable} If $\CMS^0(A)$ is connected then $\CMS(A)$ is connected.
\end{proposition}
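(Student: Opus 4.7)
The plan is to apply Lemma \ref{conn-lemm} with $\C = \CMS(A)$ and $\D = \CMS^0(A)$. Conditions (1) and (2) are, respectively, noted in the paragraph preceding the proposition and given by hypothesis. All the work is in verifying condition (3): for every non-zero thick subcategory $\T$ of $\CMS(A)$, the intersection $\CMS^0(A) \cap \T$ is non-zero. The strategy mirrors the proof of Proposition \ref{der-local}, with the role of the length-dimension filtration played by the dimension of the non-free locus.

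For $M \in \CMS(A)$, let $NF(M) = \{ \p \in \Spec(A) \mid M_\p \text{ is not } A_\p\text{-free}\}$; this is a closed subset of $\Spec(A)$, and $M \in \CMS^0(A)$ exactly when $NF(M) \subseteq \{\m\}$. Given a non-zero thick subcategory $\T$, I would pick $Y \in \T$ non-zero with $\dim NF(Y)$ as small as possible, and argue that this minimum is $0$. Suppose for contradiction that $\dim NF(Y) > 0$. Then $NF(Y)$ has minimal primes $\p_1,\dots,\p_r$ all distinct from $\m$, and prime avoidance produces an element $x \in \m \setminus \bigcup \p_i$. Complete the multiplication map to a triangle
\[
Y \xrightarrow{\,x\,} Y \longrightarrow Z \longrightarrow Y[1]
\]
in $\CMS(A)$. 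Thickness of $\T$ gives $Z \in \T$.

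The two remaining checks are the heart of the argument. First, $NF(Z) \subseteq NF(Y) \cap V(x)$: localizing at $\p$, if $\p \notin NF(Y)$ then $Y_\p = 0$ in $\CMS(A_\p)$ and hence $Z_\p = 0$, while if $x \notin \p$ then $x\colon Y_\p \to Y_\p$ is an isomorphism in $\CMS(A_\p)$ and again $Z_\p = 0$. Since $x$ avoids every minimal prime of $NF(Y)$, this forces $\dim NF(Z) < \dim NF(Y)$. Second, $Z \neq 0$: if $Z$ were zero then $x \cdot \operatorname{id}_Y$ would be an isomorphism in $\CMS(A)$, so $x$ would act invertibly on the finitely generated $A$-module $\sHom_A(Y,Y) = \Hom_{\CMS(A)}(Y,Y)$; but $\operatorname{id}_Y$ is a non-zero element of this module and $x \in \m$, contradicting Nakayama's lemma. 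These two facts together contradict the minimality of $Y$, so $\dim NF(Y) = 0$, i.e. $Y \in \CMS^0(A) \cap \T$.

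The main obstacle is the non-vanishing of the cone $Z$; this is where the Gorenstein hypothesis and the finite generation of stable Hom are genuinely used, and where the analogy with Proposition \ref{der-local} (which used Nakayama on cohomology) becomes Nakayama applied to $\sHom$. Once both the support reduction and the non-vanishing are in hand, Lemma \ref{conn-lemm} delivers connectedness of $\CMS(A)$ immediately.
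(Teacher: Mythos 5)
Your proposal is correct, and its overall architecture coincides with the paper's: apply Lemma \ref{conn-lemm} with $\D = \CMS^0(A)$, and establish condition~(3) by choosing $Y \in \T$ non-zero minimizing the dimension of its non-free locus (which is precisely what the paper calls $\Supp(Y)$ in $\CMS(A)$), then using the cone of multiplication by a suitable $x \in \m$ to drop that dimension and derive a contradiction. Where you genuinely diverge from the paper is in the non-vanishing of the cone $Z$, and your route is arguably cleaner. The paper additionally requires $x$ to avoid the associated primes of $A$ so that $x$ is $Y$-regular, then unwinds the triangle into a short exact sequence $0 \to Q \to Z \to Y/xY \to 0$ with $Q$ free, and argues via finite projective dimension of $Y/xY$ that $Y$ would have to be free. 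You instead observe that $Z = 0$ would make $x\cdot\operatorname{id}_Y$ an isomorphism in $\CMS(A)$, hence $x$ would act invertibly on the finitely generated $A$-module $\sHom(Y,Y)$, which Nakayama forbids since $\operatorname{id}_Y \neq 0$. This is purely triangulated, avoids any appeal to the explicit module-theoretic model of the cone, and does not need $x$ to be a nonzerodivisor on $A$ or $Y$ --- you only need $x$ to avoid the minimal primes of $NF(Y)$. The paper does record the identity $\Supp(M) = \Supp(\sHom(M,M))$ but does not exploit it at this step; your argument is the natural one to run once that identity is in hand.
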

We first give an application of Proposition \ref{stable}.
\begin{corollary}
\label{hyp}
Let $(A,\m)$ be a local Gorenstein ring. Let $\CMS(A)$ be the stable category of maximal \CM \ $A$-modules. Then  $\CMS(A)$ is connected in the following cases:
\begin{enumerate}[\rm (1)]
  \item $A$ is a hypersurface singularity.
  \item $\dim A = 1, 2$ and multiplicity of $A$ is $\geq 3$.
\end{enumerate}
\end{corollary}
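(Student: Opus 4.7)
The plan is to invoke Proposition \ref{stable} to reduce to showing that $\CMS^0(A)$ is connected in each case, and then appeal directly to the facts already collected in Section 3.

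I would first record two general principles. Principle A: a nonzero triangulated category with no proper thick subcategories is automatically connected, since any nontrivial decomposition $\C = \C_1 \oplus \C_2$ with both summands nonzero immediately produces two proper nonzero thick subcategories. Principle B: in a Krull-Remak-Schmidt triangulated category, $\thick(X)$ is connected for every indecomposable $X$. Indeed, if $\thick(X) = \C_1 \oplus \C_2$, then since $X$ is indecomposable the decomposition of $X$ across $\C_1$ and $\C_2$ forces $X$ to lie entirely in some $\C_i$; but then $\thick(X) \subseteq \C_i$, so the other summand is zero.

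For case (1), $A$ a hypersurface singularity, I would invoke example \textbf{I}(2): by \cite[6.6]{T-thick} the subcategory $\CMS^0(A)$ has no proper thick subcategories. By Principle A it is connected, and Proposition \ref{stable} then yields connectedness of $\CMS(A)$. For case (2), $\dim A = 1, 2$ and multiplicity at least three, I would use example \textbf{II}(2): by \cite[2.6, 2.9]{T-thick} we have $\CMS^0(A) = \thick(\Omega^d_A(k))$, and by \cite[Theorems A, B]{Taka} the syzygy $\Omega^d_A(k)$ is indecomposable in the relevant dimensions and multiplicity range. Principle B then gives connectedness of $\CMS^0(A)$, and Proposition \ref{stable} finishes the job.

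The proof is really a concatenation of results already established, so no deep obstacle remains. The one point to check carefully is the standing hypotheses needed in case (2): the identification $\CMS^0(A) = \thick(\Omega^d_A(k))$ and the Krull-Remak-Schmidt property used in Principle B both implicitly rely on $A$ being Henselian (as stated in example \textbf{II}(2)). This assumption should either be read into the statement of the corollary or added explicitly; once that bookkeeping is settled, the argument runs as sketched above.
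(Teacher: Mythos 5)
Your proposal matches the paper's proof exactly: both deduce connectedness of $\CMS^0(A)$ from examples I(2) and II(2) of Section~3 and then apply Proposition~\ref{stable}, and your Principles A and B are just the (correct) justifications implicit in those examples. Your observation about the Henselian hypothesis in case~(2) is a legitimate catch --- the corollary as stated says only ``local Gorenstein,'' while example~II(2) and the cited results of Takahashi require $A$ to be Henselian, so that hypothesis should indeed be read into (or added to) the statement.
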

\begin{proof}
  In all the cases we have $\CMS^0(A)$ is connected, see I.(2) and II.(2) in the previous section. So the result follows from \ref{stable}.
\end{proof}
We now give
\begin{proof}[Proof of Proposition \ref{stable}]
By \ref{conn-lemm} it suffices to show that if $\T$ is a non-zero thick subcategory of $\CMS(A)$ then $\T \cap \CMS^0(A) \neq 0$.

For $M \in \CMS(A)$ let
$$\Supp(M) = \{ P \mid P \ \text{is prime and} \ M_P \neq 0 \ \in \CMS(A_P) \}. $$
It is easily shown that $\Supp(M) = \Supp(\sHom(M,M))$.

Let $Y \in \T$ be non-zero such that
$$ \dim  \Supp(Y)  = \min\{ \dim \Supp(Z) \mid Z \in \T \ \text{and} \ Z \neq 0 \}. $$
We assert that $\dim  \Supp(Y) = 0$. If not let
$$x \in \m \setminus \left(\bigcup_{P \in \Ass(A)}P \cup \bigcup_{Q \text{minimal in}\ \Supp(Y)} Q \right). $$
Note $x$ is a non-zero divisor of $A$ ( and hence of $Y$). We have a triangle
$$ Y \xrightarrow{x} Y \rt Z \rt Y[1].$$
It is evident that $\dim \Supp(Z) \leq \dim \Supp(Y) -1$. Furthermore note $Z \neq 0$ in $\CMS(A)$ (i.e., $Z$ is not free as an $A$-module, for otherwise we have an exact sequence
$0 \rt Q \rt Z \rt Y/xY \rt 0$ with $Q$-free (this follows from the triangle structure in $\CMS(A)$). So $Y/xY$ has finite projective dimension. As $x$ is $Y$-regular it follows that $Y$ has finite projective dimension. So $Y$ is free as an $A$-module and thus $Y = 0$ in $\CMS(A)$, a contradiction. As $\T$ is thick we get $Z\in \T$. This is a contradiction by our choice of $Y$.
Thus  $\dim \Supp(Y) = 0$, in other words $Y \in \CMS^0(A)$. So $\T \cap \CMS^0(A) \neq 0$.
\end{proof}
\section{Proof of Theorems \ref{decomp} and \ref{decomp-A}}
In this section we  give proofs of Theorems \ref{decomp} and \ref{decomp-A}. We first give
\begin{proof}[Proof of Theorem \ref{decomp}]
Let
$$\Lambda  = \{ \D \mid \D \ \text{is a connected thick subcategory of $\C$} \}.$$
First note that $\Lambda$ is a set. Let $X \in \C$ be indecomposable. Then we have $\thick(X)$ is connected. So $\Lambda$ is non-empty.
We define a partial order on $\Lambda$ with $\D \leq \D^\prime$ if $\D \subseteq \D^\prime$.

We first assert that $\Lambda$ has maximal elements. For that we use Zorn's Lemma. Let $\{ \D_\alpha \}_{ \alpha \in \Delta}$ be a chain in $\Lambda$.
Set $$ \D = \bigcup_{\alpha \in \Delta} \D_\alpha. $$
Then it is easy to see that $\D$ is a thick triangulated subcategory of $\C$. We show $\D$ is connected. Suppose if possible $\D$ is disconnected. Say $\D = \D_1 \oplus \D_2$.
where $\D_1 \neq 0$ and $\D_2 \neq 0$. Let $\D_\gamma \cap \D_1 \neq 0$. Then by \ref{conn-cat} it follows that $\D_\gamma \subseteq \D_1$. Similarly  as $\D_2 \neq 0$ it follows that $\D_\beta \subseteq \D_2$. As $\{ \D_\alpha \}_{ \alpha \in \Delta}$ is  a chain it follows that $\D_\gamma \subseteq \D_\beta$ or $\D_\beta \subseteq \D_\gamma$. Either case yields that $\D_1 \cap \D_2 \neq  \phi$, a contradiction. So $\D$ is connected. Thus $\Lambda$ has maximal elements.

Let $X$ be indecomposable. Set
$$\Lambda_X  = \{ \D \mid \D \ \text{is a connected thick subcategory of $\C$ and $\D \supseteq \thick(X)$} \}.$$
Then $\thick(X) \in \Lambda_X$. We give the obvious partial order of $\Lambda_X$ and similarly prove that $\Lambda_X$ has maximal elements. Also note a maximal element in $\Lambda_X$ is also maximal in $\Lambda$.

Let $\D, \D^\prime $ be maximal elements in $\Lambda$. As $\thick(\D, \D^\prime)$ is disconnected it follows that $\thick(\D \oplus \D^\prime) = \D \oplus \D^\prime$.
An easy induction
yields that if $\D_1, \ldots, \D_n$ are maximal elements in $\Lambda$ then
$$ \thick(\D_1 \oplus \cdots \oplus \D_n)  = \D_1 \oplus \cdots \oplus \D_n.$$
Consider
$$\D = \bigoplus_{\stackrel{\D_\alpha}{ \D_\alpha \text{maximal in} \ \Lambda}} \D_\alpha.$$
Then $\D$ is clearly a thick subcategory of $\C$. Let $X \in \C$ be indecomposable. Then $\thick(X) \subseteq \D^\prime$ for some maximal element in $\Lambda_X$. As noted before $\D^\prime $ is maximal in $\Lambda$. So $X \in \D$.
Therefore $\D = \C$. The result follows.
\end{proof}
Next we give
\begin{proof}[Proof of Theorem \ref{decomp-A}]
We first prove:\\
Claim: Let $X \in \C$. Then $X$ is a finite direct sum of indecomposables in $\C$. \\
Proof of Claim: If $X$ is indecomposable then we have nothing to show. Otherwise $X = U_1 \oplus U_2$. If $U_1, U_2$ are indecomposable then we are done. Iterating we may assume
$X = U_1 \oplus \cdots \oplus U_n$.  Note that $\bigoplus_{n \geq 1} \Hom_\C(U_i, U_i)$ is a direct summand of $\Hom_\C(X, X)$. If $E$ is a finitely generated $A$-module set $\mu(E) = \dim_A E/\m E$. Notice $\mu(\Hom_\C(X, X)) \geq n$. Thus if $\mu(\Hom_\C(X,X)) = r$ then $X$ decomposes as a direct sum of utmost $r$ indecomposables.

We remark that if $X$ is indecomposable then $\thick(X)$ is connected.
Rest of the proof is similar to proof of Theorem \ref{decomp}.
\end{proof}
\section{A triangulated category with infinitely many components}
We give an example of a triangulated category which decomposes into infinitely many components.

\s Let $k$ be a field and let $A = k[X_1, \ldots, X_d]$. Let $\D = D^b(\mmod A) = K^b(\proj A)$ be the bounded derived category of $A$. We note that $A$ has infinitely many maximal ideals.
Set
$$\C = \D_f = \{ X \in K^b(\proj A) \mid H^*(X) \ \text{has finite length} \}. $$
If $\m$ is a maximal ideal in $A$ then set
$$\C_\m = \{ X \in \C \mid \ \Supp(H^*(X)) \subseteq \m\}. $$
We claim that if $X \in \C_\m$ and $Y \in \C_\n$ where $\m$ and $\n$ are distinct maximal ideals then $\Hom_\C(X, Y) = 0$. To see this simply localize at a maximal ideal $P$ of $A$. Then $X_P = 0$ or $Y_P = 0$. Thus
$$\D = \thick ( \C_\m \colon  \m \ \text{a maximal ideal of $A$} ) = \bigoplus_{\m \in MaxSpec(A)}\C_\m.$$

To prove our result it suffices to show $\D = \C$. We do this by induction on $\ell(H^*(X))$. We may assume that $H^0(X) \neq 0$ and $X^i = 0$ for $i > 0$. If $\ell(H^*(X)) = 1$ then note $H^0(X) = A/\m$ for some maximal ideal $\m$ of $A$. It follows that $X \in \C_\m \subseteq \D$.
Next we assume that the result is known for all complexes with $\ell(H^*(X)) < n$ and we prove the result when $\ell(H^*(X)) = n$.
We may assume that $X^i = 0$ for $i > 0$ and $H^0(X) \neq 0$. Let $\m \in \Supp(H^0(X))$. Then $H^0(X)/\m H^0(X) \neq 0$ and so we have a surjection $H^0(X) \rt A/\m$. So we have a chain map
$f \colon X \rt Y$ where $Y$ is a projective resolution of $A/\m$. We note that $\ell(H^*(Cone(f))) = n -1$ and so by induction hypothesis $Cone(f) \in \D$. Also $Y \in \D$. As $\D$ is thick we have $X \in \D$. The result follows.

\emph{Acknowledgment:} We thank Martin Kalck for many discussions on this paper.

\end{document}